\numberwithin{equation}{section}
\theoremstyle{plain}
\newtheorem{theorem}{Theorem}[section]
\newtheorem{corollary}[theorem]{Corollary}
\newtheorem{lemma}[theorem]{Lemma}
\newtheorem{proposition}[theorem]{Proposition}
\theoremstyle{definition}
\newtheorem{example}[theorem]{Example}
\newtheorem{remark}[theorem]{Remark}
\crefname{section}{Section}{Sections}
\crefname{appendix}{Appendix}{Appendices}
\crefname{theorem}{Theorem}{Theorems}
\crefname{lemma}{Lemma}{Lemmas}
\crefname{corollary}{Corollary}	{Corollaries}
\crefname{proposition}{Proposition}{Propositions}
\crefname{claim}{Claim}{Claims}
\crefname{conjecture}{Conjecture}{Conjectures}
\crefname{definition}{Definition}{Definitions}
\crefname{problem}{Problem}{Problems}
\crefname{example}{Example}{Examples}
\crefname{remark}{Remark}{Remarks}
\crefname{figure}{Figure}{Figures}
\crefname{equation}{}{}
\crefname{enumi}{}{}
\newcommand{\QED}{\hfill \ensuremath{\Box}}
\newcommand{\R}{\mathbb{R}}
\newcommand{\abs}[1]{\mleft| #1 \mright|}
\newcommand{\norm}[1]{\mleft\| #1 \mright\|}
\newcommand{\prn}[1]{\mleft( #1 \mright)}
\newcommand{\ang}[1]{\mleft\langle #1 \mright\rangle}
\newcommand{\ld}{,\ldots,}
\newcommand{\ep}{\varepsilon}
\newcommand{\wt}{\widetilde}
\DeclareMathOperator*{\minimize}{minimize}
\DeclareMathOperator{\M}{Min}
\DeclareMathOperator{\WM}{WMin}
\DeclareMathOperator{\E}{S}
\DeclareMathOperator{\WE}{WS}
\DeclareMathOperator{\SE}{SS}
\DeclareMathOperator{\FDH}{FDH}
\title[Equality of weak efficiency and efficiency]{Characterization of the equality\\
of weak efficiency and efficiency\\
on convex free disposal hulls}
\author[N.~Hamada]{Naoki Hamada}
\address{
    KLab Inc.,
    Roppongi Hills Mori Tower,
    6-10-1 Roppongi, Minato-ku, Tokyo, 106-6122, Japan;
    RIKEN AIP-FUJITSU Collaboration Center,
    The RIKEN Center for Advanced Intelligence Project,
    Nihonbashi 1-chome Mitsui Building, 15th floor, 1-4-1 Nihonbashi, Chuo-ku, Tokyo 103-0027, Japan
}
\email{hamada-n@klab.com}
\author{Shunsuke \textsc{Ichiki}}
\address{
Department of Mathematical and Computing Science,
School of Computing,
Tokyo Institute of Technology,
Tokyo 152-8552,
Japan}
\email{ichiki@c.titech.ac.jp}
\begin{document}

\maketitle

\begin{abstract}
In solving a multi-objective optimization problem by scalarization techniques, solutions to a scalarized problem are, in general, weakly efficient rather than efficient to the original problem.
Thus, it is crucial to understand what problem ensures that all weakly efficient solutions are efficient.
In this paper, we give a characterization of the equality of the weakly efficient set and the efficient set, provided that the free disposal hull of the domain is convex.
By using this characterization, we obtain various mathematical applications.
As a practical application, we show that all weakly efficient solutions to a multi-objective LASSO with mild modification are efficient.
\end{abstract}

\section{Introduction}\label{sec:intro}
The aim of multi-objective optimization is to find efficient solutions to a given problem.
In order to do so, various scalarization techniques have been developed so far (see for example \cite{Pascoletti1984,Das1998,Miettinen1999,Messac2004,Zhang2007,Eichfelder2008,Sato2014}).
Nevertheless, there is no scalarization method that ensures for a wide variety of problems that all solutions optimal to scalarized problems are efficient to the original problem.
In general, scalarization methods only ensure that their solutions are \emph{weakly} efficient to the original problem, which means users may waste computation resources for finding inefficient, undesirable solutions.
Thus, it is crucial to understand conditions that the weak efficiency coincides with the efficiency.

In the literature, the relationship between the weak efficiency and the efficiency has been investigated.
In some cases, the set of weakly efficient solutions to a given problem can be described as the union of the sets of efficient solutions to its subproblems \cite{Lowe1984,Ward1989,Malivert1994,Benoist2000}.
This property was named the \emph{Pareto reducibility} \cite{Popovici2005} and further investigated \cite{Popovici2006,Popovici2008,LaTorre2010}.
Some relationships of the weak efficiency and the efficiency on quasi-convex problems are collected in \cite{Luc1989,Ehrgott2002}.
However, the equality between the weak efficiency and the efficiency, both of which are of the \emph{original} problem (rather than subproblems), is still unclear.

In this paper, we give a characterization of the equality of the set of weakly efficient solutions and the set of efficient solutions, provided that the free disposal hull \cite{Al-Mezel2014} of the image of an objective mapping is convex
(see \cref{thm:main_app} in \cref{sec:main-results}).
This claim is derived from our main theorem (\Cref{thm:main} in \cref{sec:main-results}), which gives a similar characterization of the equality of the weakly efficient set and the efficient set on a partially ordered Euclidean space without objective functions.
Furthermore, \cref{thm:main_app} yields various mathematical applications (see \cref{thm:app_imi,thm:app_convex_mapping,thm:app_convex,thm:app_strong} in \cref{sec:application}).
Moreover, as a practical application of \cref{thm:main_app}, we show that all weakly efficient solutions to a multi-objective LASSO with mild modification are efficient.

This paper is organized as follows.
First, in \cref{sec:main-results}, we present the main results (\cref{thm:main,thm:main_app}).
Implications of \cref{thm:main} are discussed with illustrative examples in \Cref{sec:example}.
\cref{sec:main} is devoted to the proof of \cref{thm:main}.
In order to state and prove \cref{thm:app_imi,thm:app_convex_mapping,thm:app_convex,thm:app_strong} in \cref{sec:application}, we prepare some definitions and lemmas in \cref{sec:lemma}.
In \cref{sec:practical-applications}, we investigate a multi-objective version of the LASSO with mild modification as a practical application of our result.
\Cref{sec:conclusion} provides concluding remarks.

\section{Preliminaries and the statements of the main results}\label{sec:main-results}
Unless otherwise stated, it is not necessary to assume that mappings are continuous.
Throughout this paper, we set 
\begin{align*}
    M=\set{1\ld m},
\end{align*}
where $m$ is a positive integer.
We denote a nonempty subset of $M$ by $I$.
Let $y = (y_1 \ld y_m)$ and $y' = (y_1' \ld y_m')$ be two elements of $\R^m$.
The inequality $y \leq_I y'$ (resp., $y <_I y'$) means that $y_i \leq y_i'$ (resp., $y_i < y_i'$) for all $i \in I$.
The inequality $y \lneq_I y'$ means that $y_i \leq y_i'$ for all $i \in I$ and there exists $j\in I$ such that $y_j<y'_j$.

Let $Y$ be a subset of $\R^m$.
Let $\M_I Y$ (resp., $\WM_I Y$) be the set consisting of all elements $y' \in Y$ such that there does not exist any element $y \in Y$ satisfying $y \lneq_I y'$ (resp., $y <_I y'$).
For simplicity, set $\M Y = \M_M Y$ and $\WM Y = \WM_M Y$.
Then, the set $\M Y$ (resp., $\WM Y$) is called the \emph{efficient set} (resp., the \emph{weakly efficient set}) of $Y$.

For a subset $Z$ of $\R^m$, the set $Z + \R^m_{\geq 0}$ is called the \emph{free disposal hull} of $Z$ (denoted by $\FDH Z$), where
\begin{equation*}
    \R^m_{\geq 0} = \Set{(y_1 \ld y_m) \in \R^m | y_1 \geq 0 \ld y_m \geq 0}.
\end{equation*}
For details on free disposal hulls, see \cite{Al-Mezel2014}.
A subset $Z$ of $\R^m$ is said to be \textit{convex} if $tx + (1 - t)y \in Z$ for all $x, y \in Z$ and all $t \in [0, 1]$.

The main theorem of this paper is the following.
\begin{theorem}\label{thm:main}
    Let $Y$ be a subset of $\R^m$.
    If the free disposal hull of $Y$ is convex, then the following $(\alpha)$ and $(\beta)$ are equivalent:
    \begin{enumerate}
        \item[$(\alpha)$] $\WM Y = \M Y$.
        \item[$(\beta)$] $\displaystyle \bigcup_{\emptyset \neq I \subseteq M} \M_I Y \subseteq \M Y$.
    \end{enumerate}
\end{theorem}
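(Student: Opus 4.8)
The plan is to prove the two implications separately, observing first that, independently of any convexity, one always has the chain of inclusions $\M Y \subseteq \bigcup_{\emptyset \neq I \subseteq M} \M_I Y \subseteq \WM Y$. The left inclusion is immediate because $\M Y = \M_M Y$ is one of the sets in the union. For the right inclusion, if some $y' \in \M_I Y$ failed to lie in $\WM Y$, there would be $y \in Y$ with $y <_M y'$; restricting the strict inequalities to $I$ yields $y \lneq_I y'$, contradicting $y' \in \M_I Y$. Granting this chain, the implication $(\alpha) \Rightarrow (\beta)$ comes for free: if $\WM Y = \M Y$, then $\bigcup_I \M_I Y \subseteq \WM Y = \M Y$. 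No convexity is needed here, and the chain also shows that $(\alpha)$ is equivalent to $\WM Y \subseteq \M Y$ and $(\beta)$ to $\bigcup_I \M_I Y = \M Y$.

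The substance lies in $(\beta) \Rightarrow (\alpha)$, for which I would fix $y^* \in \WM Y$ and aim to exhibit a nonempty $I \subseteq M$ with $y^* \in \M_I Y$; then $(\beta)$ forces $y^* \in \M Y$, giving $\WM Y \subseteq \M Y$ and hence $(\alpha)$. To produce $I$, I would bring in the convexity of $C := \FDH Y$ through a scalarization obtained by separation. Consider the open convex set $U = \Set{z \in \R^m | z <_M y^*}$. I claim $U \cap C = \varnothing$: any $c \in C$ has the form $y + r$ with $y \in Y$ and $r \in \R^m_{\geq 0}$, so $c \in U$ would give $y \leq_M c <_M y^*$, i.e. $y <_M y^*$, contradicting $y^* \in \WM Y$. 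Since $U$ is open and nonempty and $C$ is convex, the separating hyperplane theorem yields $\lambda \neq 0$ and $\gamma \in \R$ with $\ang{\lambda, z} \leq \gamma \leq \ang{\lambda, c}$ for all $z \in U$ and $c \in C$. Because $y^* \in \overline{U} \cap C$, the value $\ang{\lambda, y^*} = \gamma$ is forced, so $y^*$ minimizes $\ang{\lambda, \cdot}$ over $C$, and a fortiori over $Y$. Testing the left inequality on the points $z = y^* - s$ with $0 <_M s$ forces $\ang{\lambda, s} \geq 0$ for all such $s$, whence $\lambda \in \R^m_{\geq 0}$.

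Set $I = \Set{i \in M | \lambda_i > 0}$, which is nonempty since $\lambda \neq 0$. I would then verify $y^* \in \M_I Y$ directly: if some $y \in Y$ satisfied $y \lneq_I y^*$, then, writing $\ang{\lambda, \cdot} = \sum_{i \in I} \lambda_i (\cdot)_i$, the difference $\ang{\lambda, y^*} - \ang{\lambda, y} = \sum_{i \in I} \lambda_i (y^*_i - y_i)$ would be strictly positive (every term is $\geq 0$, and at least one index $j \in I$ has $\lambda_j > 0$ together with $y^*_j > y_j$), contradicting the minimality of $y^*$. Hence $y^* \in \M_I Y$, and applying $(\beta)$ closes the argument.

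I expect the main obstacle to be the separation step together with the verification that the separating normal $\lambda$ may be taken nonnegative and nonzero; this is the sole place where convexity of $\FDH Y$ is used, and it is precisely what upgrades weak efficiency into a weighted-sum scalarization with support $I$. The remaining steps—the inclusion chain and the passage from the weight $\lambda$ to membership in $\M_I Y$—are elementary bookkeeping with the order relations $\lneq_I$ and $<_M$.
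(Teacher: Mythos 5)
Your proposal is correct and follows essentially the same route as the paper: separate the convex set $\FDH Y$ from the open negative orthant anchored at $y^*$, extract a nonzero nonnegative normal $\lambda$, take $I$ to be its support, conclude $y^* \in \M_I Y$, and invoke $(\beta)$; the paper merely translates $\FDH Y$ by $y^*$ before separating and proves $\lambda \geq 0$ by an explicit point construction rather than your limiting argument. The easy direction $(\alpha) \Rightarrow (\beta)$ is likewise the same inclusion-chain observation as in the paper.
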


\begin{remark}\label{remark:1}
    As in the proof of \cref{thm:main}, the hypothesis that the free disposal hull of $Y$ is convex is used only in the proof of $(\beta)\Rightarrow (\alpha)$ (see \cref{subsec:main_proof_2}).
    In the proof of $(\alpha)\Rightarrow (\beta)$ of \cref{thm:main}, it is not necessary to assume that the free disposal hull of $Y$ is convex (see \cref{subsec:main_proof_1}).
\end{remark}

Now, in order to state \cref{thm:main_app}, we will prepare some definitions.
Let $f = (f_1 \ld f_m): X \to \R^m$ be a mapping, and $I = \Set{i_1 \ld i_k}$ $(i_1 < \cdots < i_k)$ be a nonempty subset of $M$, where $X$ is a given set and $k$ is the number of the elements of $I$.
Let $f_I: X \to \R^k$ be the mapping defined by $f_I = (f_{i_1} \ld f_{i_k})$.
A point $x^* \in X$ is called an \emph{efficient solution} (resp., a \emph{weakly efficient solution}) to the following multi-objective optimization problem:
\begin{align*}
    \minimize_{x \in X} f_I(x) = (f_{i_1}(x) \ld f_{i_k}(x)),\label{eq:mop}
\end{align*}
if $f(x^*) \in \M_I f(X)$ (resp., $f(x^*) \in \WM_I f(X)$).
By $\E(f_I, X)$ (resp., $\WE(f_I, X)$), we denote the set consisting of all efficient solutions (resp., all weakly efficient solutions).
Namely,
\begin{align*}
    \E(f_I, X)  & = f^{-1}(\M_I f(X)),\\
    \WE(f_I, X) & = f^{-1}(\WM_I f(X)).
\end{align*}

It is well known that a solution to a weighting problem is a weekly efficient solution (for example, see \cite[Theorem~3.1.1 (p.~78)]{Miettinen1999}).
On the other hand, a solution to a weighting problem is not necessarily an efficient solution.
For a given mapping $f: X \to \R^m$, if $\WE(f, X) = \E(f, X)$, then a solution to weighting problem is always an efficient solution.
Therefore, characterizations of $\WE(f, X) = \E(f, X)$ are useful and significant.

As an application of \cref{thm:main} to multi-objective optimization problems, we have the following, which can be easily shown by setting $Y = f(X)$ in \cref{thm:main}.
\begin{proposition}\label{thm:main_app}
    Let $f = (f_1 \ld f_m): X \to \R^m$ be a mapping, where $X$ is a given set.
    If the free disposal hull of $f(X)$ is convex, then the following $(\alpha)$ and $(\beta)$ are equivalent:
    \begin{enumerate}
        \item[$(\alpha)$] $\WE(f, X) = \E(f, X)$.
        \item[$(\beta)$] ${\displaystyle \bigcup_{\emptyset \neq I \subseteq M}\E(f_I, X) \subseteq \E(f, X)}$.
    \end{enumerate}
\end{proposition}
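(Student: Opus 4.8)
The plan is to apply \cref{thm:main} with $Y = f(X)$ and then transport the resulting equivalence through the preimage map $f^{-1}$. Since the proposition assumes that $\FDH f(X)$ is convex, the hypothesis of \cref{thm:main} holds for this $Y$, so the theorem gives that $\WM Y = \M Y$ if and only if $\bigcup_{\emptyset \neq I \subseteq M} \M_I Y \subseteq \M Y$. By the definitions recalled above, $\WE(f, X) = f^{-1}(\WM Y)$, $\E(f, X) = f^{-1}(\M Y)$, and $\E(f_I, X) = f^{-1}(\M_I Y)$. It therefore suffices to show that the equality $\WE(f, X) = \E(f, X)$ is equivalent to $\WM Y = \M Y$, and that the inclusion $\bigcup_{\emptyset \neq I \subseteq M} \E(f_I, X) \subseteq \E(f, X)$ is equivalent to $\bigcup_{\emptyset \neq I \subseteq M} \M_I Y \subseteq \M Y$.

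First I would record the elementary fact that the preimage map is injective and inclusion-reflecting on subsets of the image $f(X)$: for any $A \subseteq f(X)$ one has $f(f^{-1}(A)) = A$ because $f$ is surjective onto $f(X)$, and consequently, for $A, B \subseteq f(X)$, the inclusion $f^{-1}(A) \subseteq f^{-1}(B)$ holds if and only if $A \subseteq B$, and in particular $f^{-1}(A) = f^{-1}(B)$ if and only if $A = B$. The only thing to verify is that every set entering the argument is contained in $f(X) = Y$; this is immediate, since $\WM Y$, $\M Y$, and each $\M_I Y$ are by definition subsets of $Y$, and hence so is the union $\bigcup_{\emptyset \neq I \subseteq M} \M_I Y$.

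With this observation the two translations are routine. Taking $A = \WM Y$ and $B = \M Y$ yields $\WE(f, X) = \E(f, X)$ if and only if $\WM Y = \M Y$. For the second translation I would additionally use that $f^{-1}$ commutes with unions, so that $\bigcup_{\emptyset \neq I \subseteq M} \E(f_I, X) = f^{-1}\prn{\bigcup_{\emptyset \neq I \subseteq M} \M_I Y}$; taking $A = \bigcup_{\emptyset \neq I \subseteq M} \M_I Y$ and $B = \M Y$ then gives $\bigcup_{\emptyset \neq I \subseteq M} \E(f_I, X) \subseteq \E(f, X)$ if and only if $\bigcup_{\emptyset \neq I \subseteq M} \M_I Y \subseteq \M Y$. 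Combining these two equivalences with the equivalence supplied by \cref{thm:main} finishes the proof.

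The argument is entirely formal, and the only point requiring care, which I expect to be the sole minor obstacle, is the injectivity of the preimage map. This property fails for arbitrary subsets of $\R^m$, since distinct sets meeting the complement of $f(X)$ can share a preimage; it works here precisely because all of the efficient and weakly efficient sets involved lie inside $f(X)$, on which $f^{-1}$ admits $f$ as a left inverse.
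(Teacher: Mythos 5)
Your proof is correct and takes essentially the same route as the paper, which simply observes that the proposition follows by setting $Y = f(X)$ in \cref{thm:main}. The only difference is that you make explicit the routine preimage bookkeeping (that $f^{-1}$ reflects inclusions between subsets of $f(X)$, since $\WM Y$, $\M Y$, and each $\M_I Y$ lie in $Y = f(X)$), which the paper leaves implicit.
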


\section{Illustration of \texorpdfstring{\cref{thm:main}}{Theorem 2.1}}\label{sec:example}
In this section, we denote by $\overline{x y}$ the line segment with end points $x, y\in \R^2$.
First, we see an example that $(\beta)$ implies $(\alpha)$ in \cref{thm:main}.
\begin{example}\label{ex:1}
    Let us consider the situation shown in \cref{fig:example1}.
    The domain $Y$ in this case is defined by the convex hull of four points $p_1 = (0, 1)$, $p_2 = (1, 0)$, $p_3 = (2, 1)$, $p_4 = (1, 2)$, as shown in dark gray in the figure.
    
    \begin{figure}[ht]
        \includegraphics[width=.66\textwidth]{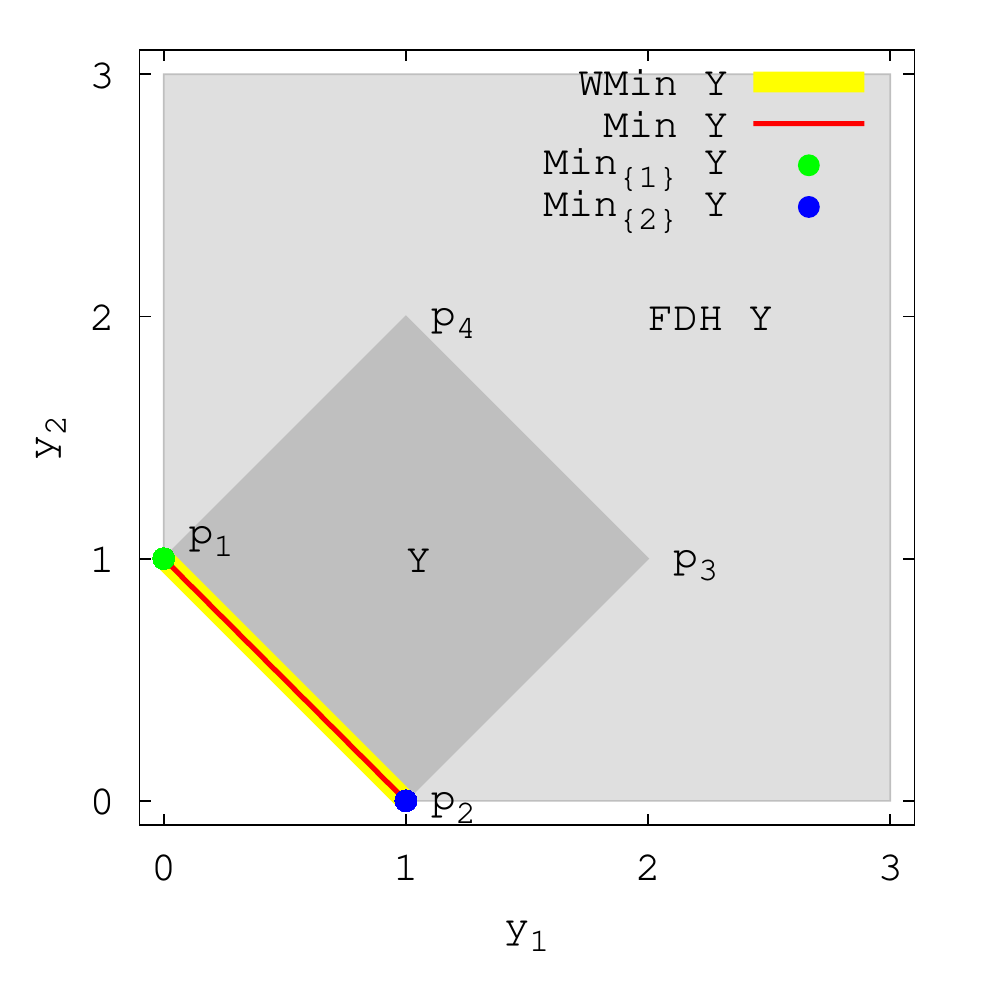}%
        \caption{The condition $(\beta)$ implies $(\alpha)$ on convex $\FDH Y$.}\label{fig:example1}
    \end{figure}
    
    It is easy to check that
    \begin{align*}
        \M_{\Set{1}} Y & = \Set{p_1},\\
        \M_{\Set{2}} Y & = \Set{p_2},\\
        \WM Y = \M   Y & = \overline{p_1 p_2}.
    \end{align*}
    Since $\M_{\Set{1}} Y \subseteq \M Y$ and $\M_{\Set{2}} Y \subseteq \M Y$, we can see the condition $(\beta)$ in \cref{thm:main} holds.
    The free disposal hull of $Y$ is the region shown in light gray in the figure, which is a convex set.
    Thus, we can apply \cref{thm:main} and obtain $(\alpha)$.
    Actually, the condition $\WM Y = \M Y$ holds in this example.
\end{example}

On the other hand, \cref{ex:2} shows that $\FDH Y$ is convex, but both $(\alpha)$ and $(\beta)$ do not hold.
\begin{example}\label{ex:2}
    Let us consider the situation shown in \cref{fig:example2}.
    The domain $Y$ is the convex hull of four points $p_1 = (0, 2)$, $p_2 = (0, 1)$, $p_3 = (1, 0)$, $p_4 = (2, 0)$, as shown in dark gray in the figure.
    We have the same free disposal hull as in \cref{ex:1}, which is a convex set, and thus we can apply \cref{thm:main} to this case.
    
    \begin{figure}[ht]
        \includegraphics[width=.66\textwidth]{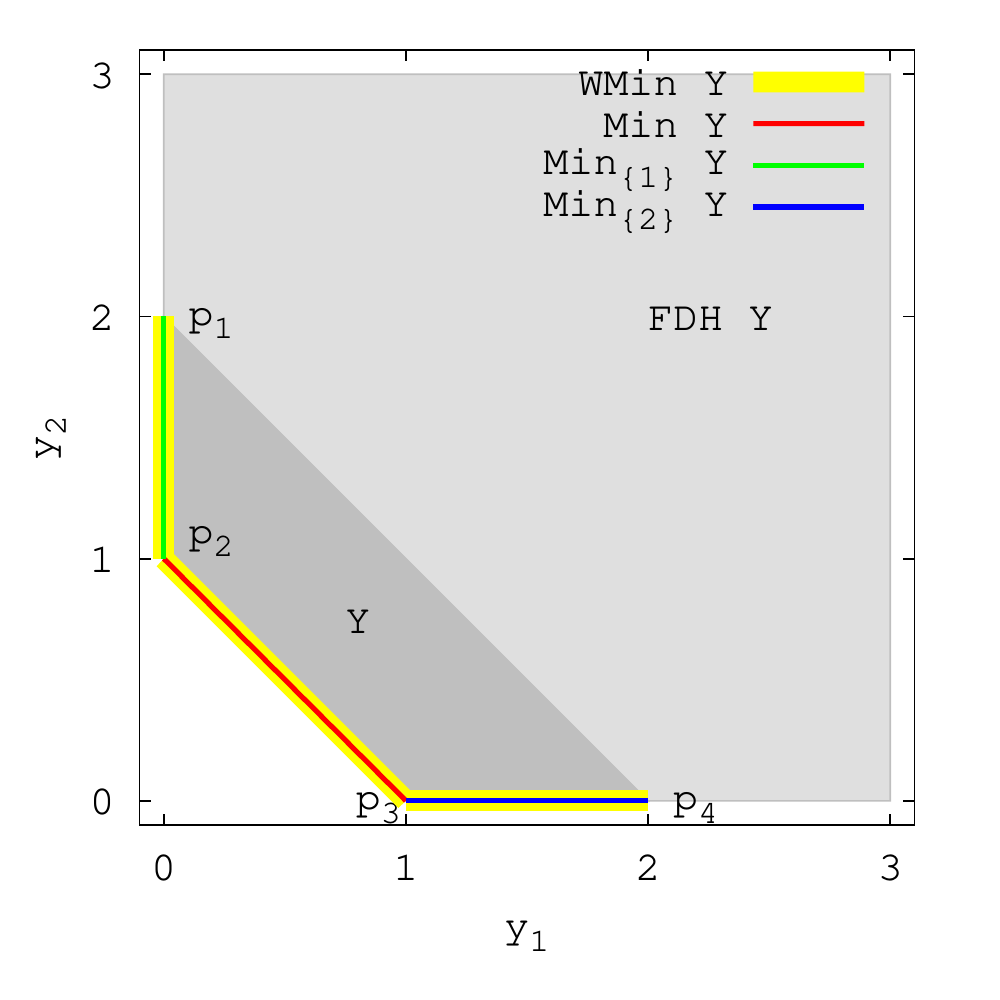}%
        \caption{The condition $\lnot (\beta)$ implies $\lnot (\alpha)$ on convex $\FDH Y$.}\label{fig:example2}
    \end{figure}
    
    We can easily check:
    \begin{align*}
        \M_{\Set{1}} Y & = \overline{p_1 p_2},\\
        \M_{\Set{2}} Y & = \overline{p_3 p_4},\\
        \M           Y & = \overline{p_2 p_3},\\
        \WM          Y & = \overline{p_1 p_2} \cup \overline{p_2 p_3} \cup \overline{p_3 p_4}.
    \end{align*}
    Since $\M_{\Set{1}} Y \not \subseteq \M Y$, the condition $(\beta)$ in \cref{thm:main} does not hold.
    By \cref{thm:main}, the condition $(\alpha)$ $\WM Y = \M Y$ does not hold, as shown in the above equations.
\end{example}

The following example shows why the assumption of \cref{thm:main} is required.
\begin{example}\label{ex:3}
    Let us consider the situation shown in \cref{fig:example3} where the domain $Y$ is the nonconvex polygon with five vertices $p_1 = (0, 3)$, $p_2 = (1, 2)$, $p_3 = (1, 1)$, $p_4 = (2, 0)$, $p_5 = (2, 3)$, shown in dark gray.
    
    \begin{figure}[ht]
        \includegraphics[width=.66\textwidth]{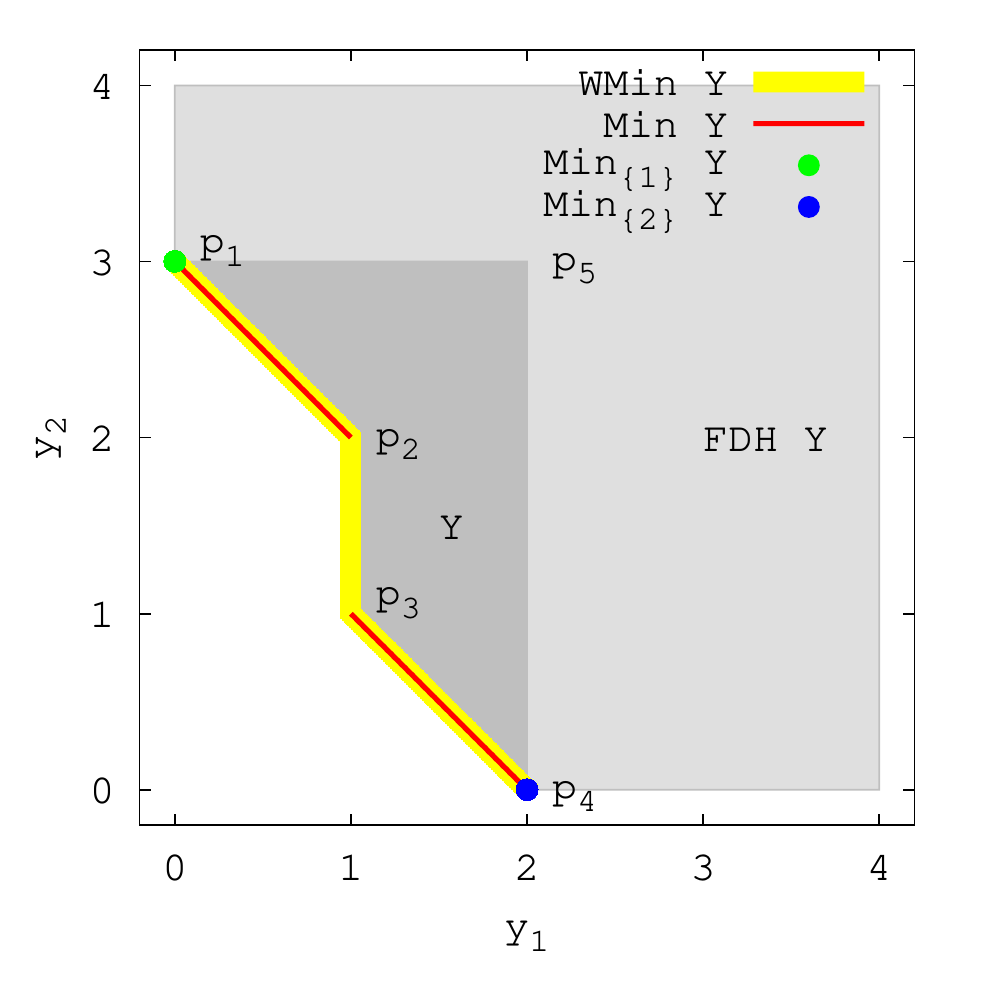}%
        \caption{The condition $(\beta)$ holds, but $(\alpha)$ does not on nonconvex $\FDH Y$ .}\label{fig:example3}
    \end{figure}
    
    We can easily check:
    \begin{align*}
        \M_{\Set{1}} Y & = \Set{p_1},\\
        \M_{\Set{2}} Y & = \Set{p_4},\\
        \M           Y & = \overline{p_1 p_2} \cup \overline{p_3 p_4} \setminus \Set{p_2},\\
        \WM          Y & = \overline{p_1 p_2} \cup \overline{p_2 p_3} \cup \overline{p_3 p_4}.
    \end{align*}
    Since $\M_{\Set{1}} Y \subseteq \M Y$ and $\M_{\Set{2}} Y \subseteq \M Y$, the condition $(\beta)$ holds.
    However, the free disposal hull of $Y$ is a nonconvex set, as shown in light gray in the figure.
    Hence, we cannot apply \cref{thm:main} to this case.
    In such a case, $(\alpha)$ can be false even if $(\beta)$ is true.
    Actually, in this example, the condition $(\alpha)$ does not hold as seen in the above equations.
\end{example}

In \cref{thm:main}, the assumption (the free disposal hull of $Y$ is convex) is not a necessary condition.
In the following example, we will give a case where the free disposal hull is nonconvex but the condition $(\alpha)$ holds (thus, $(\beta)$ also holds).

\begin{example}\label{ex:4}
    Let us consider the situation shown in \cref{fig:example4} where the domain $Y$ is a nonconvex polygon with four vertices $p_1 = (0, 3)$, $p_2 = (2, 2)$, $p_3 = (3, 0)$, $p_4 = (3, 3)$, as shown in dark gray in the figure.
    
    \begin{figure}[ht]
        \includegraphics[width=.66\textwidth]{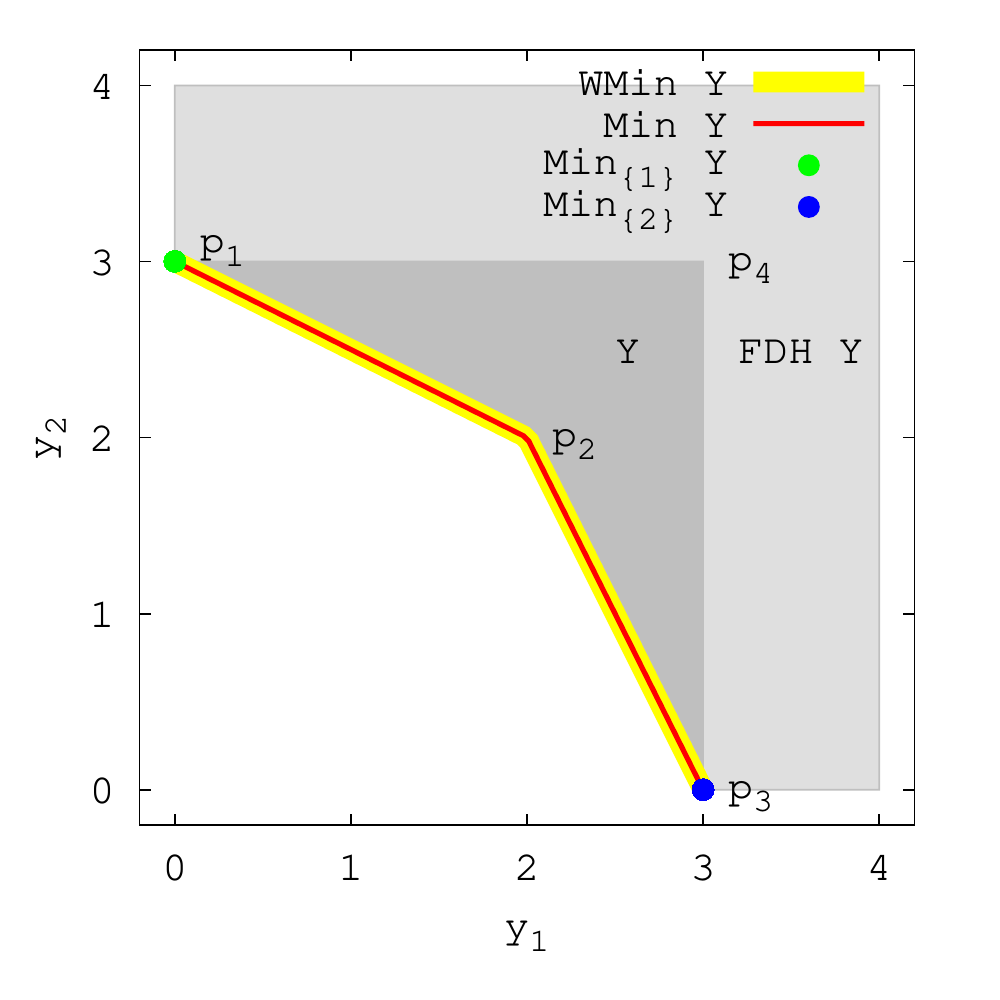}%
        \caption{The condition $(\beta)$ holds, and $(\alpha)$ does on nonconvex $\FDH Y$.}\label{fig:example4}
    \end{figure}
    
    We can easily check:
    \begin{align*}
        \M_{\Set{1}} Y & = \Set{p_1},\\
        \M_{\Set{2}} Y & = \Set{p_3},\\
        \WM Y = \M   Y & = \overline{p_1 p_2} \cup \overline{p_2 p_3}.
    \end{align*}
    Since $\M_{\Set{1}} Y \subseteq \M Y$ and $\M_{\Set{2}} Y \subseteq \M Y$, the condition $(\beta)$ holds.
    The free disposal hull of $Y$ is a nonconvex set, as shown in light gray in the figure.
    Hence we cannot apply \cref{thm:main} to this case.
    Nevertheless, the condition $(\alpha)$ actually holds as seen in the above equations.
\end{example}

\section{Proof of \texorpdfstring{\cref{thm:main}}{Theorem 2.1}}\label{sec:main}
In the case $Y=\emptyset$, it is trivially seen that both $(\alpha)$ and $(\beta)$ hold.
Hence, in what follows, we will consider the case $Y\not=\emptyset$.

\subsection{Proof of \texorpdfstring{$(\alpha) \Rightarrow (\beta)$}{(a) => (b)}}\label{subsec:main_proof_1}
Let $I$ be a nonempty subset of $M$.
Then, it is clearly seen that
\begin{equation*}
    \M_I Y \subseteq \WM_I Y \subseteq \WM Y.
\end{equation*}
By $(\alpha)$, we get $\M_I Y \subseteq \M Y$.
Thus, we have $(\beta)$.
\QED

\subsection{Proof of \texorpdfstring{$(\beta) \Rightarrow (\alpha)$}{(b) => (a)}}\label{subsec:main_proof_2}
It is sufficient to show that $\WM Y \subseteq \M Y$.
Let $y^*=(y^*_1\ld y^*_m) \in \WM Y$ be an arbitrary element.
Set
\begin{align*}
    A & = \Set{(y_1-y^*_1\ld y_m-y^*_m)\in \R^m | (y_1\ld y_m)\in \FDH Y},\\
    B & = \Set{y \in \R^m | y <_M 0}.
\end{align*}
Here, note that $0=(0\ld 0)\in \R^m$ in the above description of $B$.
Then, we will have $A \cap B = \emptyset$ by contradiction.
Suppose that $A \cap B \neq \emptyset$.
Then, there exist $y' \in Y$ and $z \in \R^m_{\geq 0}$ satisfying $y' + z - y^* <_M 0$.
Since $z \in \R^m_{\geq 0}$, we get $y' - y^* <_M 0$.
This contradicts $y^* \in \WM Y$.
Hence, we have $A \cap B = \emptyset$.

In the following lemma, $\ang{,}$ stands for the inner product in $\R^m$.
\begin{lemma}[Separation theorem~\cite{Matousek2002}]\label{thm_convex}
Let $D_1$ and $D_2$ be nonempty convex subsets of $\R^m$ satisfying $D_1 \cap D_2 = \emptyset$.
Then, there exist $a = (a_1 \ld a_m) \in \R^m$ $(a \neq 0)$ and $b \in \R$ such that the following both assertions hold.
\begin{enumerate}[$(1)$]
    \item For any $y \in D_1$, we have $\ang{a, y} \geq b$.
    \item For any $y \in D_2$, we have $\ang{a, y} \leq b$.
\end{enumerate}
\end{lemma}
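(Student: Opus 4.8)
The plan is to reduce the two-set statement to separating a single convex set from the origin, and then to produce the separating functional by a nearest-point projection, using a compactness limit to handle the boundary case.

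First I would replace the pair $(D_1, D_2)$ by the difference set $D_1 - D_2 = \Set{u - v | u \in D_1, v \in D_2}$, which is convex because it is the Minkowski sum $D_1 + (-D_2)$ of two convex sets, and which avoids the origin: $0 \in D_1 - D_2$ would force $u = v$ for some $u \in D_1$, $v \in D_2$, contradicting $D_1 \cap D_2 = \emptyset$. If I can produce some $a \neq 0$ with $\ang{a, x} \geq 0$ for every $x \in D_1 - D_2$, then $\ang{a, u} \geq \ang{a, v}$ for all $u \in D_1$ and $v \in D_2$, so $\sup_{v \in D_2} \ang{a, v} \leq \inf_{u \in D_1} \ang{a, u}$, and any $b$ lying between these two finite values satisfies both required inequalities. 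Thus everything reduces to separating the origin from a convex set $C_0$ with $0 \notin C_0$.

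Passing to the closure $C = \overline{C_0}$, which is nonempty, closed and convex, I would argue in two cases. If $0 \notin C$, let $p$ be the unique nearest point of $C$ to the origin; then $p \neq 0$, and the variational inequality $\ang{x - p, 0 - p} \leq 0$ valid for all $x \in C$ rearranges to $\ang{p, x} \geq \norm{p}^2 > 0$, so $a = p$ gives (even strict) separation.

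The case $0 \in C$, i.e.\ $0 \in C \setminus C_0$, is the main obstacle, since only a supporting hyperplane at the boundary point $0$ is available. I would first note $0 \notin \operatorname{int} C$: for a convex set $\operatorname{int} C = \operatorname{int} C_0 \subseteq C_0$, which cannot contain $0$ (in the degenerate case $\operatorname{int} C = \emptyset$ the set $C$ lies in a proper affine subspace and a normal direction separates at once). Hence there is a sequence $x_k \to 0$ with $x_k \notin C$; applying the previous case and normalizing yields unit vectors $a_k$ and scalars $\beta_k$ with $\ang{a_k, x} \leq \beta_k$ for all $x \in C$ and $\beta_k < \ang{a_k, x_k}$. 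Since $0 \in C$ we get $0 \leq \beta_k < \ang{a_k, x_k} \leq \norm{x_k} \to 0$. Compactness of the unit sphere gives a subsequence with $a_k \to a$, $\norm{a} = 1$, and letting $k \to \infty$ in $\ang{a_k, x} \leq \beta_k$ yields $\ang{a, x} \leq 0$ for every $x \in C \supseteq C_0$; replacing $a$ by $-a$ and feeding this into the reduction above completes the proof. The delicate points are the identity $\operatorname{int} C = \operatorname{int} C_0$ and the normalization $\norm{a_k} = 1$, which is exactly what keeps the limit $a$ from degenerating to $0$.
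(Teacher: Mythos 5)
Your proof is correct, but there is nothing in the paper to compare it against: the paper does not prove this lemma at all. It is imported as a classical separation theorem with the citation \cite{Matousek2002} and used as a black box in the proof of $(\beta)\Rightarrow(\alpha)$ of \cref{thm:main}, where it is applied to the sets $A$ and $B$. Your argument supplies a genuine, self-contained proof along the standard textbook lines: reduce to separating the convex difference set $D_1 - D_2$ from the origin; when the origin lies outside the closure, use the nearest-point projection and its variational inequality $\ang{x-p,0-p}\leq 0$; when the origin lies on the boundary of the closure, take strict separators at exterior points $x_k \to 0$, normalize them to unit length, and extract a convergent subsequence of normals. The two steps you flag as delicate are indeed the ones that need care, and both are handled correctly: the identity $\operatorname{int}\overline{C_0}=\operatorname{int}C_0$ is a standard fact for convex sets with nonempty interior, and in the degenerate case $C_0$ lies in a hyperplane, where $\ang{a,\cdot}$ is constant on $C_0$, so a normal (or its negative) weakly separates even if $0$ lies on that hyperplane --- which suffices, since the lemma only asserts weak inequalities. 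Likewise, the normalization $\norm{a_k}=1$ together with compactness of the unit sphere is exactly what prevents the limiting normal from collapsing to $0$, and your choice of $b$ between $\sup_{v\in D_2}\ang{a,v}$ and $\inf_{u\in D_1}\ang{a,u}$ is legitimate because nonemptiness of both sets makes these quantities finite. What the paper's approach buys is brevity, appropriate for a classical result that is not the paper's contribution; what yours buys is self-containedness, at the cost of relying on two standard convexity facts (the projection inequality and the interior-of-closure identity) that would themselves need proofs in a fully elementary exposition.
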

Note that $A$ and $B$ are nonempty convex subsets of $\R^m$.
Hence, by \cref{thm_convex}, there exist $a = (a_1 \ld a_m) \in \R^m$ $(a \neq 0)$ and $b \in \R$ such that the following both assertions hold.
\begin{enumerate}[(1')]
    \item  For any $y \in A$, we have $\ang{a, y} \geq b$.
    \item  For any $y \in B$, we have $\ang{a, y} \leq b$.
\end{enumerate}
Then, we will show that $b = 0$.
Since $0 = (0 \ld 0) \in A$, we have $\ang{a, 0} \geq b$ by (1').
Namely, we get $b \leq 0$.
Since $(-\ep \ld -\ep) \in B$ for any sufficiently small $\ep > 0$, it is clearly seen that $b = 0$ by (2').

We will show that $a_i \geq 0$ for any $i \in M$ by contradiction.
Suppose that there exists an element $i' \in M$ satisfying $a_{i'} < 0$.
Let $y = (y_1 \ld y_m) \in \R^m$ be the element given by
\begin{equation*}
y_i = \begin{cases}
-1 & \text{if $i \neq i'$},\\
\dfrac{\prn{\sum_{j = 1, j \neq i'}^m \abs{a_j}} + 1}{a_{i'}} & \text{if $i = i'$}.
\end{cases}
\end{equation*}
Then, we get $y \in B$ and $\ang{a, y} > 0$.
This contradicts (2').
Hence, it follows that $a_i \geq 0$ for any $i \in M$.

Now, set
\begin{equation*}
    I = \Set{ i \in M | a_i > 0}.
\end{equation*}
Notice that $I \neq \emptyset$.
Set $I = \Set{i_1 \ld i_k}$, where $k$ is an integer $(1 \leq k \leq m)$ and $i_1<\cdots <i_k$.

We will show that $y^*\in \M_I Y$.
Let $y = (y_1 \ld y_m) \in Y$ be any element.
Since $(y_1-y^*_1\ld y_m-y^*_m) \in A$ and $b = 0$, by (1'), we have
\begin{align*}
    a_{i_1}(y_{i_1} - y^*_{i_1}) + \cdots + a_{i_k}(y_{i_k} - y^*_{i_k}) \geq 0.
\end{align*}
Since $a_{i_1} > 0 \ld a_{i_k} > 0$, the element $y \in Y$ does not satisfy $y \lneq_I y^*$.
Therefore, we obtain $y^* \in \M_I Y$.
By the assumption $(\beta)$, it follows that $y^* \in \M Y$.
\QED

\section{Preliminaries for applications of \texorpdfstring{\cref{thm:main_app}}{Proposition 2.3}}\label{sec:lemma}
Let $X$ be a convex subset of $\R^n$.
A function $f: X \to \R$ is said to be \emph{convex} if
\begin{equation*}
    f(t x + (1 - t) y) \leq t f(x) + (1 - t) f(y)
\end{equation*}
for all $x, y \in X$ and all $t \in [0, 1]$.
A function $f: X \to \R$ is said to be \emph{strongly convex} if there exists $\alpha > 0$ satisfying
\begin{equation*}
    f(t x + (1 - t) y) \leq t f(x) + (1 - t) f(y) - \frac{1}{2} \alpha t (1 - t) \norm{x - y}^2.
\end{equation*}
for all $x, y \in X$ and all $t \in [0, 1]$, where $\norm{x - y}$ denotes the Euclidean norm of $x - y$.
For details on convex functions and strongly convex functions, see \cite{Nesterov2004}.
A mapping $f = (f_1 \ld f_m): X \to \R^m$ is said to be \emph{convex} (resp., \emph{strongly convex}) if every $f_i$ is convex (resp., strongly convex).

First, we give the following well-known result.
For the sake of the readers' convenience, we also give the proof.
\begin{lemma}\label{thm:app_convex_lemma}
Let $X$ be a convex subset of $\R^n$, and $f: X \to \R^m$ be a convex mapping.
Then, the free disposal hull of $f(X)$ is convex.
\end{lemma}
\begin{proof}[Proof of \cref{thm:app_convex_lemma}]
Let $y=(y_1\ld y_m)$,  $\wt{y}=(\wt{y}_1\ld \wt{y}_m)\in \FDH f(X)$ be arbitrary points and $t\in [0,1]$ be an arbitrary element.
Then, there exist $x\in X$ (resp., $\wt{x}\in X$) and $z = (z_1\ld z_m) \in \R^m_{\geq 0}$ (resp., $\wt{z}=(\wt{z}_1\ld \wt{z}_m)\in \R^m_{\geq 0})$ such that $y=f(x)+z$ (resp., $\wt{y}=f(\wt{x})+\wt{z}$).
Let $i$ be an arbitrary integer satisfying $1\leq i\leq m$.
Since $f_i$ is convex, we have
\begin{align*}
    f_i(tx+(1-t)\wt{x}) \leq tf_i(x)+(1-t)f_i(\wt{x}),
\end{align*}
where $f=(f_1\ld f_m)$.
Since $z_i\geq 0$ and $\wt{z}_i\geq 0$ for any $i=1\ld m$, we also get
\begin{align*}
    tf_i(x)+(1-t)f_i(\wt{x})\leq t(f_i(x)+z_i)+(1-t)(f_i(\wt{x})+\wt{z}_i) = ty_i+(1-t)\wt{y}_i.
\end{align*}
Hence, we obtain
\begin{align}\label{eq:app_conv_1}
    f_i(tx+(1-t)\wt{x}) \leq ty_i+(1-t)\wt{y}_i.
\end{align}
Since we have \cref{eq:app_conv_1} for any integer $i$ satisfying $1\leq i\leq m$, it follows that $ty + (1 - t) \wt{y} \in \FDH f(X)$.
\end{proof}

In the following, for two sets $U, V$, and a subset $W$ of $U$, the restriction of a given mapping $g: U \to V$ to $W$ is denoted by $g|_{W}: W \to V$.

\begin{lemma}\label{thm:lem_injective}
Let $f: X \to \R^m$ be a mapping, where $X$ is a given set.
Let $I=\Set{i_1\ld i_k}$ $(i_1<\cdots <i_k)$ be a nonempty subset of $M$, where $k$ is the number of the elements of $I$.
If $f_I|_{\E(f_I, X)}: \E(f_I, X) \to \R^k$ is injective, then we have $\E(f_I, X) \subseteq \E(f, X)$.
\end{lemma}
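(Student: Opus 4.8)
The plan is to argue by contradiction, showing that any efficient solution to the subproblem $f_I$ that fails to be efficient for the full problem $f$ would violate the injectivity hypothesis. First I would fix an arbitrary $x^* \in \E(f_I, X)$, so that $f(x^*) \in \M_I f(X)$, and suppose toward a contradiction that $x^* \notin \E(f, X)$. Since $\E(f, X) = f^{-1}(\M f(X))$, this means $f(x^*) \notin \M f(X)$, and by the definition of $\M f(X) = \M_M f(X)$ there is a point $x \in X$ with $f(x) \lneq_M f(x^*)$; that is, $f_i(x) \le f_i(x^*)$ for every $i \in M$, with strict inequality $f_j(x) < f_j(x^*)$ for at least one index $j \in M$.

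Restricting attention to the coordinates in $I$, I would then exploit the $I$-efficiency of $x^*$. Since $f_i(x) \le f_i(x^*)$ holds in particular for all $i \in I$, while $f(x^*) \in \M_I f(X)$ forbids any point of $f(X)$ from satisfying $\lneq_I f(x^*)$, none of these $I$-inequalities can be strict. Hence $f_i(x) = f_i(x^*)$ for all $i \in I$, i.e. $f_I(x) = f_I(x^*)$. The key observation at this stage is that the condition defining $\M_I f(X)$ depends only on the $I$-coordinates, so the equality $f_I(x) = f_I(x^*)$ together with $f(x^*) \in \M_I f(X)$ forces $f(x) \in \M_I f(X)$ as well; in other words, $x$ also lies in $\E(f_I, X)$.

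With both $x$ and $x^*$ now belonging to $\E(f_I, X)$ and satisfying $f_I(x) = f_I(x^*)$, I would invoke the injectivity of $f_I|_{\E(f_I, X)}$ to conclude $x = x^*$. But then $f(x) = f(x^*)$, contradicting the strict inequality $f_j(x) < f_j(x^*)$ obtained above. This contradiction shows $x^* \in \E(f, X)$, and since $x^*$ was arbitrary, $\E(f_I, X) \subseteq \E(f, X)$.

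I expect the main obstacle to be the bookkeeping in the middle step, namely recognizing that the $I$-optimality of $x^*$ simultaneously forces equality on all $I$-coordinates of the dominating point and promotes that point $x$ into $\E(f_I, X)$, which is exactly what makes the injectivity hypothesis applicable. Everything else is a direct unwinding of the definitions of $\M_I$, $\M$, and the relations $\lneq_I$ and $\lneq_M$; in particular the argument is purely order-theoretic and uses no structure on $X$ beyond its being a set.
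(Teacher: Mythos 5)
Your proposal is correct and follows essentially the same argument as the paper: suppose $x^* \in \E(f_I,X)\setminus\E(f,X)$, take a dominating point, use $I$-efficiency to force equality of the $I$-coordinates, promote the dominating point into $\E(f_I,X)$, and contradict injectivity. The only cosmetic difference is that the paper contradicts injectivity directly from having two distinct points with equal $f_I$-values, whereas you apply injectivity to deduce the points coincide and then contradict the strict inequality; these are the same final step phrased two ways.
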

\begin{proof}[Proof of \cref{thm:lem_injective}]
Suppose that there exists an element $x \in \E(f_I, X)$ such that $x \not \in \E(f, X)$.
Then, there exists an element $y \in X$ ($y \neq x$) satisfying $f_i(y) \leq f_i(x)$ for any $i \in M$.
Since $I \subseteq M$, it follows that $f_i(y) \leq f_i(x)$ for any $i \in I$.
Since $x \in \E(f_I, X)$, we get $f_I(x) = f_I(y)$.
Therefore, we have $y \in \E(f_I, X)$.
This contradicts the assumption that $f_I|_{\E(f_I, X)}: \E(f_I, X) \to \R^k$ is injective.
\end{proof}

\begin{lemma}\label{thm:lem_injective_strongly}
Let $X$ be a convex subset of $\R^n$, and $f: X \to \R^m$ be a strongly convex mapping.
Then, $f|_{\E(f, X)}: \E(f, X) \to \R^m$ is injective.
\end{lemma}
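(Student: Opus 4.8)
The plan is to argue by contradiction, using strong convexity evaluated at the midpoint of two supposedly distinct preimages. Suppose $f|_{\E(f,X)}$ is not injective, so there exist $x, y \in \E(f, X)$ with $x \neq y$ and $f(x) = f(y)$. Since $X$ is convex, the midpoint $w = \frac{1}{2} x + \frac{1}{2} y$ lies in $X$, and I would use this $w$ to exhibit a point whose objective value strictly dominates $f(x)$ in every coordinate, contradicting the efficiency of $x$.

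The key computation is to apply strong convexity componentwise. For each $i \in M$, let $\alpha_i > 0$ be the strong convexity constant of $f_i$. Taking $t = \frac{1}{2}$ in the definition of strong convexity yields
\begin{equation*}
    f_i(w) \leq \tfrac{1}{2} f_i(x) + \tfrac{1}{2} f_i(y) - \tfrac{1}{8} \alpha_i \norm{x - y}^2.
\end{equation*}
Since $f_i(x) = f_i(y)$, the right-hand side equals $f_i(x) - \frac{1}{8}\alpha_i \norm{x - y}^2$. Because $x \neq y$ we have $\norm{x - y}^2 > 0$, and $\alpha_i > 0$, so this gives the strict inequality $f_i(w) < f_i(x)$.

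As this holds for every $i \in M$, we obtain $f(w) <_M f(x)$, and in particular $f(w) \lneq_M f(x)$ with $w \in X$. This shows that $f(x)$ is dominated, i.e.\ $f(x) \notin \M f(X)$, which contradicts $x \in \E(f, X) = f^{-1}(\M f(X))$. Therefore no such pair $x \neq y$ exists, and $f|_{\E(f,X)}: \E(f,X) \to \R^m$ is injective.

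There is essentially no hard step here: the entire argument reduces to the one-line observation that strong convexity forces a strict decrease at the midpoint whenever the two endpoints share the same objective value. The only point requiring mild care is that a \emph{strongly convex mapping} means each component $f_i$ is strongly convex, possibly with its own constant $\alpha_i$; since there are only finitely many coordinates and each $\alpha_i$ is positive, the strict inequality holds simultaneously in all $m$ coordinates, which is precisely what efficiency forbids.
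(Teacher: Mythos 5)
Your proof is correct and follows essentially the same argument as the paper: assume two distinct efficient points with equal images, apply strong convexity componentwise at the midpoint with $t=\frac{1}{2}$, and derive a strict decrease in every coordinate, contradicting efficiency. The only (harmless) difference is that you spell out explicitly that the midpoint lies in $X$ and that strict domination in all coordinates violates $x \in \E(f,X)$, steps the paper leaves implicit.
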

\begin{proof}[Proof of \cref{thm:lem_injective_strongly}]
Suppose that $f|_{\E(f, X)}: \E(f, X) \to \R^m$ is not injective.
Then, there exist $x, y \in \E(f, X)$ such that $x \neq y$ and $f|_{\E(f, X)}(x) = f|_{\E(f, X)}(y)$.
Let $i$ be an arbitrary integer satisfying $1\leq i\leq m$.
Since $f = (f_1 \ld f_m)$ is strongly convex, there exists $\alpha_i > 0$ satisfying
\begin{equation*}
    f_i(tx + (1 - t)y) \leq tf_i(x) + (1 - t)f_i(y) - \frac{1}{2}\alpha_i t(1 - t) \norm{x - y}^2
\end{equation*}
for the points $x, y \in \E(f, X)$ and all $t \in [0, 1]$.
Set $t = \frac{1}{2}$.
Then, we get
\begin{equation*}
    f_i \prn{\frac{x + y}{2}} \leq \frac{f_i(x) + f_i(y)}{2} - \frac{\alpha_i}{8} \norm{x - y}^2.
\end{equation*}
Since $f|_{\E(f, X)}(x) = f|_{\E(f, X)}(y)$, we have
\begin{equation*}
    f_i \prn{\frac{x + y}{2}} \leq f_i(x) - \frac{\alpha_i}{8} \norm{x - y}^2.
\end{equation*}
Since $x \neq y$ and $\alpha_i > 0$, it follows that
\begin{equation*}
    f_i \prn{\frac{x + y}{2}} < f_i(x).
\end{equation*}
This contradicts $x \in \E(f, X)$.
\end{proof}

\section{Mathematical applications of \texorpdfstring{\cref{thm:main_app}}{Proposition 2.3}}\label{sec:application}
In this section, as mathematical applications of \cref{thm:main_app}, we give  \cref{thm:app_imi,thm:app_convex_mapping,thm:app_convex,thm:app_strong}.

First, \cref{thm:main_app} gives a characterization of the equality of the weak efficiency and the efficiency for possibly nonconvex problems having a convex image $f(X)$ as follows:
\begin{corollary}\label{thm:app_imi}
Let $f = (f_1 \ld f_m): X \to \R^m$ be a mapping, where $X$ is a given set.
If $f(X)$ is convex, then the following $(\alpha)$ and $(\beta)$ are equivalent: \begin{enumerate}
    \item[$(\alpha)$]
        $\WE(f, X) = \E(f, X)$.
    \item[$(\beta)$]
        ${\displaystyle \bigcup_{\emptyset \neq I \subseteq M} \E(f_I, X) \subseteq \E(f, X)}$.
\end{enumerate}
\end{corollary}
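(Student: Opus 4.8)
The plan is to deduce this directly from \cref{thm:main_app} by verifying its hypothesis. \Cref{thm:main_app} already asserts the equivalence of $(\alpha)$ and $(\beta)$ whenever the free disposal hull of $f(X)$ is convex, so it suffices to show that convexity of $f(X)$ alone forces $\FDH f(X)$ to be convex. Note that, unlike in \cref{thm:app_convex_lemma}, I would not need the mapping $f$ itself to be convex; only the image $f(X)$ is assumed convex here.

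To check this, recall that by definition $\FDH f(X) = f(X) + \R^m_{\geq 0}$, i.e.\ the Minkowski sum of $f(X)$ and the nonnegative orthant. I would argue that the Minkowski sum of two convex sets is convex: if $u = p + q$ and $u' = p' + q'$ with $p, p' \in f(X)$ and $q, q' \in \R^m_{\geq 0}$, then for any $t \in [0,1]$,
\[
    t u + (1 - t) u' = \bigl(t p + (1 - t) p'\bigr) + \bigl(t q + (1 - t) q'\bigr),
\]
and the two bracketed terms lie in $f(X)$ and $\R^m_{\geq 0}$ respectively, since both of these sets are convex (the orthant trivially so). Hence $t u + (1 - t) u' \in \FDH f(X)$, which proves that $\FDH f(X)$ is convex.

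With the hypothesis of \cref{thm:main_app} thus established, the equivalence of $(\alpha)$ and $(\beta)$ follows at once by applying that proposition. There is essentially no obstacle in this argument: its entire content is the one-line observation that a free disposal hull is a Minkowski sum with the (convex) orthant, so convexity is inherited from $f(X)$. The only conceptual point worth flagging is that this corollary genuinely broadens the scope beyond the convex-mapping case, covering possibly nonconvex mappings whose image happens to be convex.
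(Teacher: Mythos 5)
Your proposal is correct and follows exactly the paper's route: the paper also deduces the corollary from \cref{thm:main_app} after noting that convexity of $f(X)$ implies convexity of $\FDH f(X)$ (which the paper treats as "clearly seen," while you spell out the Minkowski-sum argument). No gap; your added detail is just an explicit justification of that one step.
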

\begin{proof}[Proof of \cref{thm:app_imi}]
Since $f(X)$ is convex, it is clearly seen that the free disposal hull of $f(X)$ is also convex.
Thus, by \cref{thm:main_app}, we have \cref{thm:app_imi}.
\end{proof}

Unfortunately, it is not easy to check the convexity of the image of a given mapping which is possibly nonconvex.
A more workable condition ensuring this characterization is the convexity of a given mapping itself.
\begin{corollary}\label{thm:app_convex_mapping}
Let $X$ be a convex subset of $\R^n$ and $f = (f_1 \ld f_m): X \to \R^m$ be a convex mapping.
Then, the following $(\alpha)$ and $(\beta)$ are equivalent:
\begin{enumerate}
    \item[$(\alpha)$]
        $\WE(f, X) = \E(f, X)$.
    \item[$(\beta)$]
        ${\displaystyle \bigcup_{\emptyset \neq I \subseteq M} \E(f_I, X) \subseteq \E(f, X)}$.
\end{enumerate}
\end{corollary}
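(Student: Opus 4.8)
The plan is to reduce \cref{thm:app_convex_mapping} directly to \cref{thm:main_app} by verifying its one hypothesis. Note that the statements of $(\alpha)$ and $(\beta)$ here are word-for-word identical to those in \cref{thm:main_app}; the only difference between the two results is that \cref{thm:main_app} assumes outright that the free disposal hull of $f(X)$ is convex, whereas here we instead assume that $X$ is convex and $f$ is a convex mapping. So the entire task is to bridge these two hypotheses.

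First I would invoke \cref{thm:app_convex_lemma}: since $X$ is a convex subset of $\R^n$ and $f = (f_1 \ld f_m)$ is convex, that lemma gives immediately that the free disposal hull of $f(X)$ is convex. It is worth stressing that one should route through \cref{thm:app_convex_lemma} rather than through \cref{thm:app_imi}, because convexity of the mapping $f$ does \emph{not} in general force $f(X)$ itself to be convex --- it is precisely the free disposal hull $\FDH f(X) = f(X) + \R^m_{\geq 0}$ that inherits convexity. With the convexity of $\FDH f(X)$ in hand, I would then apply \cref{thm:main_app} to $f$, which yields the equivalence of $(\alpha)$ and $(\beta)$ verbatim. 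This completes the proof.

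There is no genuine obstacle at this stage: the corollary is a one-line composition of two already-established results. All of the real content has been discharged upstream --- the separation-theorem argument underlying $(\beta)\Rightarrow(\alpha)$ sits in \cref{subsec:main_proof_2}, and the geometric passage from a convex mapping to a convex free disposal hull is exactly \cref{thm:app_convex_lemma}. Accordingly, I expect the write-up to be only a couple of sentences, and the main thing to get right is the citation chain ($\cref{thm:app_convex_lemma}$ first, then $\cref{thm:main_app}$) rather than any new estimate or construction.
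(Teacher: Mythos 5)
Your proposal is correct and follows exactly the paper's own proof: invoke \cref{thm:app_convex_lemma} to get convexity of the free disposal hull of $f(X)$, then apply \cref{thm:main_app}. Your added remark that one must route through \cref{thm:app_convex_lemma} rather than \cref{thm:app_imi} (since a convex mapping need not have a convex image) is a sound observation, though not needed for the argument itself.
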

\begin{proof}[Proof of \cref{thm:app_convex_mapping}]
Since $f$ is convex, by \cref{thm:app_convex_lemma}, the free disposal full of $f(X)$ is convex.
Therefore, by \cref{thm:main_app}, we get \cref{thm:app_convex_mapping}.
\end{proof}
As a direct consequence, the characterization is valid for convex programming problems.
\begin{corollary}\label{thm:app_convex}
Let $X$ be a convex subset of $\R^n$ and $f = (f_1 \ld f_m): X \to \R^m$ be a convex mapping.
Let $g_1 \ld g_\ell$ be convex functions of $X$ into $\R$, where $\ell$ is a positive integer.
Set
\begin{equation*}
    \Omega = \Set{x \in X | g_1(x) \leq 0 \ld g_\ell (x) \leq 0}.
\end{equation*}
Then, the following $(\alpha)$ and $(\beta)$ are equivalent:
\begin{enumerate}
    \item[$(\alpha)$]
        $\WE(f|_\Omega, \Omega) = \E(f|_\Omega, \Omega)$.
    \item[$(\beta)$]
        ${\displaystyle \bigcup_{\emptyset \neq I \subseteq M} \E((f|_\Omega)_I, \Omega) \subseteq \E(f|_\Omega, \Omega)}$.
\end{enumerate}
\end{corollary}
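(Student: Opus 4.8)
The plan is to deduce \cref{thm:app_convex} directly from \cref{thm:app_convex_mapping} by checking two preservation facts: that the feasible region $\Omega$ is convex, and that the restriction $f|_\Omega$ is a convex mapping on $\Omega$. Granting these, the equivalence of $(\alpha)$ and $(\beta)$ is obtained by invoking \cref{thm:app_convex_mapping} with the convex domain $X$ replaced by $\Omega$ and the convex mapping $f$ replaced by $f|_\Omega$; the conditions $(\alpha)$ and $(\beta)$ of the corollary are then literally the conditions $(\alpha)$ and $(\beta)$ of \cref{thm:app_convex_mapping} applied to this new data.

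First I would verify that $\Omega$ is convex. For each $j$ with $1 \leq j \leq \ell$, the sublevel set $\Set{x \in X | g_j(x) \leq 0}$ is convex: if $x, y$ belong to it and $t \in [0,1]$, then convexity of $g_j$ gives $g_j(tx + (1-t)y) \leq t g_j(x) + (1-t) g_j(y) \leq 0$, while $tx+(1-t)y \in X$ since $X$ is convex. As $\Omega$ is the intersection of $X$ with these finitely many convex sublevel sets, $\Omega$ is convex.

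Next I would check that $f|_\Omega : \Omega \to \R^m$ is a convex mapping. Fixing a component index $i$ and points $x, y \in \Omega$ with $t \in [0,1]$, convexity of $\Omega$ guarantees $tx+(1-t)y \in \Omega \subseteq X$, so the convexity inequality for $f_i$ already valid on $X$ holds at these points; hence every $(f|_\Omega)_i = f_i|_\Omega$ is convex, and therefore $f|_\Omega$ is a convex mapping in the sense defined in \cref{sec:lemma}.

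With $\Omega$ convex and $f|_\Omega$ convex, \cref{thm:app_convex_mapping} applies and yields the desired equivalence; the degenerate case $\Omega = \emptyset$ is handled automatically, since it falls under the $Y = \emptyset$ case already settled at the start of the proof of \cref{thm:main}. I expect no genuine obstacle here: the entire content is that convexity is inherited by feasible regions cut out by convex inequality constraints and by restrictions to convex subsets, both immediate from the definitions, with all the substantive work having been carried out in \cref{thm:main} and \cref{thm:app_convex_mapping}.
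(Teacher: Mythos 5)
Your proposal is correct and follows exactly the route of the paper's own proof: establish that $\Omega$ is convex and that $f|_\Omega$ is a convex mapping, then invoke \cref{thm:app_convex_mapping}. The only difference is that you spell out the sublevel-set and restriction arguments (and the $\Omega = \emptyset$ case) that the paper dismisses as clear, which is fine but not a different approach.
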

\begin{proof}[Proof of \cref{thm:app_convex}]
Since $g_1 \ld g_\ell$ are convex functions, it is clearly seen that $\Omega$ is convex.
Since the mapping $f|_\Omega: \Omega \to \R^m$ is convex, by  \cref{thm:app_convex_mapping}, we get \cref{thm:app_convex}.
\end{proof}

Let $f = (f_1 \ld f_m): X \to \R^m$ be a mapping, where $X$ is a set.
Then, $x^* \in X$ is called a \emph{strictly efficient solution} if there does not exist $x \in X$ $(x \neq x^*)$ such that $f_i(x) \leq f_i(x^*)$ for all $i = 1 \ld m$.
We denote the set of all strictly efficient solutions to the problem minimizing $f$ by $\SE(f, X)$.

\begin{corollary}\label{thm:app_strong}
    Let $X$ be a convex subset of $\R^n$, and $f: X \to \R^m$ be a strongly convex mapping.
    Then, we have
    \begin{align*}
        \WE(f, X) = \E(f, X) = \SE(f, X).
    \end{align*}
\end{corollary}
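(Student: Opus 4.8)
The plan is to establish the elementary chain $\SE(f, X) \subseteq \E(f, X) \subseteq \WE(f, X)$ and then close it by proving the two reverse containments with the lemmas of \cref{sec:lemma}. The inclusion $\E(f, X) \subseteq \WE(f, X)$ is immediate from $\M f(X) \subseteq \WM f(X)$ (as already observed in \cref{subsec:main_proof_1}), while $\SE(f, X) \subseteq \E(f, X)$ follows directly from the definitions: a point $x^*$ admitting some $x$ with $f(x) \lneq_M f(x^*)$ in particular satisfies $x \neq x^*$ and $f_i(x) \leq f_i(x^*)$ for all $i$, hence cannot be strictly efficient.

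To prove $\WE(f, X) = \E(f, X)$, I would invoke \cref{thm:app_convex_mapping}. Since strong convexity implies convexity, $f$ is a convex mapping on the convex set $X$, so the hypothesis of that corollary is met and it suffices to verify condition $(\beta)$, namely $\E(f_I, X) \subseteq \E(f, X)$ for every nonempty $I \subseteq M$. The key observation is that each restricted mapping $f_I$ is again strongly convex, being assembled from a subfamily of the strongly convex components $f_1 \ld f_m$. Thus \cref{thm:lem_injective_strongly}, applied to $f_I$ in place of $f$, shows that $f_I|_{\E(f_I, X)}$ is injective, and then \cref{thm:lem_injective} yields $\E(f_I, X) \subseteq \E(f, X)$. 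This establishes $(\beta)$, and \cref{thm:app_convex_mapping} returns $(\alpha)$, i.e.\ $\WE(f, X) = \E(f, X)$.

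It remains to prove $\E(f, X) \subseteq \SE(f, X)$, which I would argue by contradiction. If $x^* \in \E(f, X)$ were not strictly efficient, there would be some $x \neq x^*$ with $f_i(x) \leq f_i(x^*)$ for all $i$. Efficiency of $x^*$ forbids $f(x) \lneq_M f(x^*)$, so necessarily $f(x) = f(x^*)$; but then $f(x) \in \M f(X)$ as well, whence $x \in \E(f, X)$, and the two distinct points $x, x^*$ with equal image contradict the injectivity of $f|_{\E(f, X)}$ guaranteed by \cref{thm:lem_injective_strongly}. Combining the three resulting equalities gives the claim.

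The step I expect to demand the most care is the verification of $(\beta)$: one must notice that strong convexity is inherited by the coordinate-projected mapping $f_I$, so that \cref{thm:lem_injective_strongly} applies to $f_I$ itself, and then chain this with \cref{thm:lem_injective}. By contrast, the identity $\E(f, X) = \SE(f, X)$ is a short contradiction argument resting solely on the injectivity of $f|_{\E(f, X)}$.
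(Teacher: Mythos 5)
Your proposal is correct and follows essentially the same route as the paper: verify condition $(\beta)$ by noting that each $f_I$ is strongly convex, apply \cref{thm:lem_injective_strongly} to get injectivity of $f_I|_{\E(f_I,X)}$, chain with \cref{thm:lem_injective}, and conclude $\WE(f,X)=\E(f,X)$ via the convexity machinery (your use of \cref{thm:app_convex_mapping} is interchangeable with the paper's direct use of \cref{thm:app_convex_lemma} and \cref{thm:main_app}). Your explicit contradiction argument for $\E(f,X)=\SE(f,X)$ is exactly the step the paper leaves implicit as ``not hard to see'' from the injectivity of $f|_{\E(f,X)}$.
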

\begin{proof}[Proof of \cref{thm:app_strong}]
Since $f|_{\E(f, X)}$ is injective by \cref{thm:lem_injective_strongly}, it is not hard to see that $\E(f, X) = \SE(f, X)$.

Now, we will show that $\WE(f, X) = \E(f, X)$.
Since $f$ is strongly convex, by \cref{thm:app_convex_lemma}, the set $\FDH f(X)$ is convex.
Thus, by \cref{thm:main_app}, in order to show that $\WE(f, X) = \E(f, X)$, it is sufficient to show
\begin{equation}\label{eq:I}
    \bigcup_{\emptyset \neq I \subseteq M} \E(f_I, X) \subseteq \E(f, X).
\end{equation}
Let $I$ be any nonempty subset of $M$.
Since $f _I|_{\E(f_I, X)}: \E(f_I, X) \to \R^{k}$ is strongly convex, by \cref{thm:lem_injective_strongly}, the mapping $f _I|_{\E(f_I, X)}$ is injective, where $k$ is the number of the elements of $I$.
By \cref{thm:lem_injective}, we have $\E(f_I, X) \subseteq \E(f, X)$.
Thus, we obtain \cref{eq:I}.
\end{proof}

\section{A practical application}\label{sec:practical-applications}
In this section, as a practical application of \cref{thm:main_app}, we show that all weakly efficient solutions to a multi-objective LASSO with mild modification are efficient.
The LASSO is a sparse modeling method that is originally proposed as a single-objective optimization problem~\cite{Tibshirani1996} and sometimes treated as a multi-objective one~\cite{Coelho2020}.
Let us consider a linear regression model:
\begin{align*}
    y = \theta_1 x_1 + \theta_2 x_2 + \dots + \theta_n x_n + \xi,
\end{align*}
where $x_i$ and $\theta_i$ $(i=1\ld n)$ are a predictor and its coefficient, $y$ is a response to be predicted, and $\xi$ is a Gaussian noise.
Given a matrix $X$ with $m$ rows of observations and $n$ columns of predictors and a row vector $y$ of $m$ responses, the (original) LASSO regressor is the solution to the following problem:
\begin{equation}\label{eqn:lasso}
    \minimize_{\theta \in \R^n} g_{\lambda}(\theta) := \frac{1}{2m} \norm{X \theta - y}^2 + \lambda \abs{\theta},
\end{equation}
where $\abs{\;\cdot\;}$ is the $\ell_1$-norm and $\lambda$ is a user-specified positive number to force the solution to be sparse (i.e., the optimal $\theta$ contains many zeros).
Note that with $\lambda = 0$, the problem \cref{eqn:lasso} reduces to the ordinary least squares (OLS) regression.
Choosing an appropriate value for $\lambda$ requires repeated solution of \cref{eqn:lasso} with varying $\lambda$, which is the most time-consuming part of this method.

To find a good solution without such a costful hyper-parameter search, the problem \cref{eqn:lasso} is sometimes reformulated as a multi-objective one whose efficient solutions are optimal solutions to the original problem \cref{eqn:lasso} with different $\lambda$'s (for example, see \cite{Coelho2020}).
We treat the OLS term and the regularization term as individual objective functions:
\[
    f_1(\theta) = \frac{1}{2m} \norm{X \theta - y}^2,\quad
    f_2(\theta) = \abs{\theta}.
\]
In the multi-objective problem of minimizing $f=(f_1,f_2)$, the equality $\WE(f, \R^n)=\E(f, \R^n)$ does not necessarily hold (for example, if $X$ is a zero matrix, then $\WE(f, \R^n)=\R^n$ and $\E(f, \R^n)=\set{(0\ld0)}$).

To avoid such a corner case, we consider a modified version of multi-objective LASSO:
\begin{equation}\label{eqn:lasso-mop}
    \begin{split}
        \minimize_{\theta \in \R^n}\ & \wt{f}(\theta) := (\wt{f}_1(\theta), \wt{ f}_2(\theta))\\
        \text{where }                & \wt{f}_i(\theta) = f_i(\theta) + \ep f_2(\theta) \quad (i = 1, 2).
    \end{split}
\end{equation}
In \cref{eqn:lasso-mop}, we assume that $\ep$ is a positive real number.
Note that $\wt{f}$ in \cref{eqn:lasso-mop} is a non-differentiable mapping that is convex but never strongly convex.
As a practical application of \cref{thm:main_app}, we can show the following.
\begin{theorem}\label{thm:lasso}
In \cref{eqn:lasso-mop}, we have 
$\E(\wt{f}, \R^n)=\WE(\wt{f}, \R^n)$.
\end{theorem}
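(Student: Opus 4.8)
The plan is to apply \cref{thm:main_app} with the ground set $\R^n$ and the mapping $\wt{f}$ from \cref{eqn:lasso-mop}. Since that proposition reduces the sought equality $(\alpha)$ to the inclusion $(\beta)$, I would first clear its convexity hypothesis and then verify $(\beta)$ for every nonempty $I \subseteq M$, where here $M = \{1,2\}$.

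First I would observe that $\wt{f}$ is a convex mapping: $f_1(\theta) = \frac{1}{2m}\norm{X\theta - y}^2$ is a convex quadratic, $f_2(\theta) = \abs{\theta}$ is a norm and hence convex, and each component $\wt{f}_i = f_i + \ep f_2$ is therefore a sum of convex functions. By \cref{thm:app_convex_lemma}, $\FDH \wt{f}(\R^n)$ is convex, so \cref{thm:main_app} applies and it suffices to prove $\E(\wt{f}_I, \R^n) \subseteq \E(\wt{f}, \R^n)$ for $I = \{1,2\}$, $I = \{1\}$, and $I = \{2\}$. The case $I = M$ is an equality by definition. For $I = \{2\}$, note that $\wt{f}_2 = (1+\ep)\abs{\,\cdot\,}$ attains its minimum only at $\theta = 0$, so $\E(\wt{f}_{\{2\}}, \R^n) = \{0\}$ is a singleton; the restriction of $\wt{f}_{\{2\}}$ to a singleton is trivially injective, and \cref{thm:lem_injective} gives $\E(\wt{f}_{\{2\}}, \R^n) \subseteq \E(\wt{f}, \R^n)$.

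The remaining case $I = \{1\}$ is the crux, and I expect it to be the main obstacle. Here \cref{thm:lem_injective} is \emph{not} available: $\E(\wt{f}_{\{1\}}, \R^n)$ is the minimizer set of the LASSO-type objective $\wt{f}_1 = f_1 + \ep\abs{\,\cdot\,}$, on which $\wt{f}_1$ is constant and hence injective only when this set is a single point, which it need not be (e.g.\ collinear columns of $X$). I would instead argue by contradiction: take $\theta^* \in \E(\wt{f}_{\{1\}}, \R^n)$ that is not efficient for $\wt{f}$; then some $\theta$ satisfies $\wt{f}(\theta) \lneq_M \wt{f}(\theta^*)$, and since $\wt{f}_1(\theta) \le \wt{f}_1(\theta^*) = \min \wt{f}_1$ forces $\wt{f}_1(\theta) = \wt{f}_1(\theta^*)$, the strict inequality must lie in the second coordinate, giving $\abs{\theta} < \abs{\theta^*}$ with $\theta$ also a minimizer of $\wt{f}_1$.

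The decisive step is then to contradict ``$\theta^*,\theta$ both minimize $\wt{f}_1$ yet $\abs{\theta} \neq \abs{\theta^*}$''. Along the segment $\theta^* + t(\theta - \theta^*)$ the convex function $\wt{f}_1$ is constant (equal to its minimum), so its two convex summands $f_1$ and $\ep\abs{\,\cdot\,}$ must each be affine on that segment. But the convex quadratic $f_1$ is affine in the direction $d = \theta - \theta^*$ only if $Xd = 0$, in which case the linear term $\ang{X\theta^* - y, Xd}$ vanishes as well and $f_1$ is in fact constant along $d$; this forces $\abs{\,\cdot\,}$ to be constant along the segment, i.e.\ $\abs{\theta} = \abs{\theta^*}$, contradicting $\abs{\theta} < \abs{\theta^*}$. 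This elementary fact — that the modification by $\ep f_2$ makes the $\ell_1$-norm constant on the minimizer set of $\wt{f}_1$, ruling out the degeneracy behind the zero-matrix corner case — is exactly what the argument hinges on. Once $(\beta)$ holds in all three cases, \cref{thm:main_app} yields $\WE(\wt{f}, \R^n) = \E(\wt{f}, \R^n)$.
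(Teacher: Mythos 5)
Your proposal is correct, and its overall architecture is the same as the paper's: reduce via \cref{thm:app_convex_mapping} (i.e., \cref{thm:main_app} combined with \cref{thm:app_convex_lemma}) to the inclusions $\E(\wt{f}_I,\R^n)\subseteq\E(\wt{f},\R^n)$, settle $I=\{2\}$ by uniqueness of the minimizer of $\wt{f}_2=(1+\ep)\abs{\,\cdot\,}$, and settle $I=\{1\}$ by showing that $\abs{\,\cdot\,}$ is constant on the minimizer set of $\wt{f}_1$, which contradicts $\abs{\theta}<\abs{\theta^*}$. The genuine difference is in how this decisive fact is proved. The paper's \cref{thm:norm_lem} parametrizes the segment joining two minimizers, passes to an open subinterval on which every coordinate that is not identically zero keeps a constant sign (so that the $\ell_1$ term is linear there), computes the second derivative of $\wt{f}_1\circ c$, and deduces from its vanishing that the direction vector lies in the kernel of $X$, whence both $\norm{X\theta-y}^2$ and $\abs{\theta}$ are constant along the segment. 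You reach the same conclusion more directly: a constant sum of two convex functions forces each summand to be affine on the segment (each is convex and, being a constant minus a convex function, also concave); the convex quadratic $f_1$ is affine in the direction $d=\theta-\theta^*$ only if $Xd=0$, in which case it is constant, and then $\ep\abs{\,\cdot\,}$ must be constant as well. Both proofs ultimately hinge on $Xd=0$ for differences of minimizers, but yours dispenses with the sign-pattern and derivative bookkeeping of \cref{thm:norm_lem}, and it uses no structural property of the $\ell_1$ norm beyond convexity --- so your argument would apply verbatim to $f_1+\ep R$ for any convex regularizer $R$ with a unique minimizer, whereas the paper's computation is tied to the piecewise-linear form of $\abs{\,\cdot\,}$.
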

\begin{proof}[Proof of \cref{thm:lasso}]

Since $\wt{f}$ is convex, it is sufficient to show that $\E(\wt{f_i}, \R^n)\subseteq \E(\wt{f}, \R^n)$ for $i=1,2$ by \cref{thm:app_convex_mapping}, which is one of the applications of \cref{thm:main_app}.
Since $\wt{f_2}$ has the unique minimizer $(0\ld 0)\in \R^n$, we have $\E(\wt{f_2}, \R^n)\subseteq \E(\wt{f}, \R^n)$.
In order to show $\E(\wt{f_1}, \R^n)\subseteq \E(\wt{f}, \R^n)$, we prepare the following.
\begin{lemma}\label{thm:norm_lem}
Let $\theta^\ast, \wt{\theta}^\ast$ be elements of $\E(\wt{f_1}, \R^n)$.
Then, we have 
\begin{align*}
    \norm{X \theta^\ast - y}^2=\|X \wt{\theta}^\ast - y\|^2 \mbox{ and }
    \abs{\theta^\ast}=|\wt{\theta}^\ast|.
\end{align*}
\end{lemma}
\begin{proof}[Proof of \cref{thm:norm_lem}]
It is sufficient to consider the case $\theta^\ast\not=\wt{\theta}^\ast$.
Since $\wt{f_1}$ is convex, there exists a mapping $c=(c_1\ld c_n):[0,1]\to \R^n$ given by 
\begin{align*}
    c(t)=(a_1t+b_1\ld a_nt+b_n),
    \ \ c(0)=\theta^*, \ \ c(1)=\wt{\theta}^*
\end{align*}
such that $c(t)$ is the minimizer of $\wt{f}_1$ for all $t\in [0,1]$, where $a_1\ld a_n,b_1\ld b_n\in \R$ and $a_i\not=0$ for at least one index $i$.
Set 
\begin{align*}
    N&=\set{1\ld n},
    \\
    \wt{N}&=\set{i\in N|\mbox{$a_i=0$ and $b_i=0$}}.
\end{align*}
Since $\wt{N}\subsetneq N$, we set $N\setminus \wt{N}=\set{i_1\ld i_\rho}$.
Since $c$ is continuous, it is not hard to see that there exists an open interval $I$ $(\subseteq [0,1])$ such that for any $k=1\ld \rho$, either one of the following two holds:
\begin{enumerate}
    \item 
    We have $c_{i_k}(t)>0$ for all $t\in I$.
    \item
    We have $c_{i_k}(t)<0$ for all $t\in I$.
\end{enumerate}
Set $X=(x_{ij})_{1\leq i\leq m,1\leq j\leq n}$ and $y=(y_1\ld y_m)$.
Then, the composition $\wt{f}_1\circ c:I\to \R$ is expressed by 
\begin{align*}
    (\wt{f}_1\circ c)(t)
    =\frac{1}{2m}\sum_{j=1}^m
    \prn{
    \sum_{k=1}^\rho x_{j,i_k}(a_{i_k}t+b_{i_k})-y_j
    }^2
    +
    \ep \sum_{k=1}^\rho  q_{i_k}(a_{i_k}t+b_{i_k}),
\end{align*}
where 
\begin{align*}
    q_{i_k}&=
    \begin{cases}
    1 & \mbox{ if $c_{i_k}(t)>0$ for all $t\in I$}, \\
    -1 & \mbox{ if $c_{i_k}(t)<0$ for all $t\in I$}.
    \end{cases}
\end{align*}
By calculation, we obtain 
\begin{align*}
    \frac{d^2(\wt{f}_1\circ c)}{dt^2}(t)=\frac{1}{m}\sum_{j=1}^m\prn{
    \sum_{k=1}^\rho x_{j,i_k}a_{i_k}
    }^2,
\end{align*}
where $t\in I$.
Since $\wt{f}_1\circ c:[0,1]\to \R$ is a constant function, we have 
\begin{align}\label{eq:zero}
    \sum_{k=1}^\rho x_{j,i_k}a_{i_k}=0
\end{align}
for all $j=1\ld m$.

For simplicity, set $h(\theta)=\norm{X \theta - y}^2$.
Then, we have 
\begin{align*}
    (h\circ c)(t)&=\sum_{j=1}^m
    \prn{
    \sum_{k=1}^\rho x_{j,i_k}\prn{a_{i_k}t+b_{i_k}}-y_j
    }^2
    \\
    &=\sum_{j=1}^m
    \prn{
    \prn{\sum_{k=1}^\rho x_{j,i_k}a_{i_k}}t+\sum_{k=1}^\rho x_{j,i_k}b_{i_k}-y_j
    }^2
    \\
    &=\sum_{j=1}^m
    \prn{\sum_{k=1}^\rho x_{j,i_k}b_{i_k}-y_j
    }^2
\end{align*}
for all $t\in [0,1]$.
The last equality above is obtained by \cref{eq:zero}. 
Hence, since $h\circ c:[0,1]\to \R$ is a constant function, we obtain
\begin{align*}
    h(\theta^\ast)=(h\circ c)(0)=(h\circ c)(1)=h(\wt{\theta}^\ast).
\end{align*}
Since $\wt{f}_1(\theta^\ast)=\wt{f}_1(\wt{\theta}^\ast)$, we also have $|\theta^\ast|=|\wt{\theta}^\ast|$.
\end{proof}
Now, let $\theta\in \E(\wt{f}_1, \R^n)$ be an arbitrary element.
Suppose that $\theta\not\in\E(\wt{f}, \R^n)$.
Then, there exists $\wt{\theta}\in\R^n$ such that $\wt{f}_1(\wt{\theta})=\wt{f}_1(\theta)$ and $\wt{f}_2(\wt{\theta})<\wt{f}_2(\theta)$.
Thus, we have $\theta,\wt{\theta}\in \E(\wt{f}_1, \R^n)$ and $|\wt{\theta}|<|\theta|$.
This contradicts  \cref{thm:norm_lem}.
Hence, we obtain $\theta\in \E(\wt{f}, \R^n)$.
\end{proof}

\section{Conclusion}\label{sec:conclusion}
In this paper, we have given a characterization of the equality of weak efficiency and efficiency when the free disposal hull of the domain is convex.
By this fact, we have presented four classes of optimization problems where this characterization holds, including convex problems.
As a practical application, we have also shown that all the weakly efficient solutions to a multi-objective LASSO defined by  \cref{eqn:lasso-mop} are efficient.
We expect that the scope of the multi-objective reformulation discussed in \cref{sec:practical-applications} is not limited to the LASSO.
The same idea may be applied to a wide range of sparse modeling methods, including the group lasso \cite{Yuan2006}, the fused lasso \cite{Tibshirani2005}, the graphical lasso \cite{Friedman2007}, the smooth lasso \cite{Hebiri2011}, the elastic net \cite{Zou2005}, etc.

\section*{Acknowledgements}
The authors are grateful to Kenta Hayano, Yutaro Kabata, and Hiroshi Teramoto for their kind comments.
\mbox{Shunsuke~Ichiki} was supported by JSPS KAKENHI Grant Numbers JP19J00650 and JP17H06128.
This work is based on the discussions at 2018 IMI Joint Use Research Program, Short-term Joint Research ``multi-objective optimization and singularity theory: Classification of Pareto point singularities'' in Kyushu University.

\bibliographystyle{plain}
\bibliography{main}

\end{document}